\documentclass[12pt,reqno]{amsart}
\usepackage{indentfirst, amssymb, amsmath, amsthm, mathrsfs, setspace, indentfirst, enumerate,  mathrsfs, amsmath, amsthm}
\usepackage[bookmarksnumbered, colorlinks, plainpages]{hyperref}
\usepackage{mathrsfs}
\usepackage{cite}
\usepackage{xcolor}
\usepackage{colortbl}
\usepackage{array}
\usepackage{booktabs}
\definecolor{headernavy}{RGB}{50,100,200}
\definecolor{rowtint}{RGB}{237,244,249}
\definecolor{rulecolor}{RGB}{47,84,120}
\usepackage{tikz}
\usepackage{pgfplots}
\usepgfplotslibrary{fillbetween}
\pgfplotsset{compat=1.18}
\usetikzlibrary{arrows.meta}
\usetikzlibrary{backgrounds,calc,positioning}
\usepgfplotslibrary{fillbetween}
\usepackage{float}
\usepackage{tabularx}
\usepgfplotslibrary{fillbetween}
\usepackage[table]{xcolor}
\usepackage[dvipsnames]{xcolor}
\usetikzlibrary{positioning,calc,shadows.blur}

\definecolor{headerblue}{RGB}{220,230,245}
\definecolor{lightblue}{RGB}{248,250,255}

\newtheorem*{theoA}{Theorem A}
\newtheorem*{theoB}{Theorem B}
\newtheorem*{theoC}{Theorem C}
\newtheorem*{theoD}{Theorem D}

\newtheorem*{cor A}{Corollary A}
\newtheorem*{cor B}{Corollary B}

\newtheorem{theo}{Theorem}[section]
\newtheorem{lem}{Lemma}[section]
\newtheorem{cor}{Corollary}[section]
\newtheorem{ques}{Question}[section]

\newtheorem{rem}{Remark}[section]

\newcommand{\be}{\begin{equation}}
\newcommand{\ee}{\end{equation}}
\newcommand{\beas}{\begin{eqnarray*}}
\newcommand{\eeas}{\end{eqnarray*}}
\newcommand{\bea}{\begin{eqnarray}}
\newcommand{\eea}{\end{eqnarray}}

\numberwithin{equation}{section}
\begin{document}
\title[ B\MakeLowercase{ohr-type inequalities for operator-valued analytic.....}]{\Large B\MakeLowercase{ohr-Type Inequalities for Certain Integral Transforms of Operator-Valued Analytic Functions on Shifted Disc}}

\date{}
\author[G. H\MakeLowercase{aldar}, K. A\MakeLowercase{li}, S. M\MakeLowercase{ajumder} \MakeLowercase{and} D. C. P\MakeLowercase{ramanik}]{G\MakeLowercase{outam} H\MakeLowercase{aldar}$^{1*}$, K\MakeLowercase{awsar} A\MakeLowercase{li}$^{2}$, S\MakeLowercase{ujoy} M\MakeLowercase{ajumder}$^{3}$ \MakeLowercase{and} D\MakeLowercase{ilip} C\MakeLowercase{handra} P\MakeLowercase{ramanik}$^{2}$}
\address{$^1$Department of Mathematics, Ghani Khan Choudhury Institute of Engineering and Technology, Narayanpur, Malda-732141, West Bengal, India.}
\email{goutamiitm@gmail.com, goutamiit1986@gmail.com}
\address{$^2$Department of Mathematics, University of North Bengal, Raja Rammohunpur, Darjeeling, West Bengal, India}
\email{ali.math.du@gmail.com, dcpramanik.nbu2012@gmail.com}
\address{$^{3}$Sujoy Majumder, Department of Mathematics, Raiganj University, Raiganj, West Bengal-733134, India.}
\email{sm05math@gmail.com, sjm@raiganjuniversity.ac.in}
\renewcommand{\thefootnote}{}
\footnote{2010 \emph{Mathematics Subject Classification}: 30H05, 30A10, 30B10, 30C45.}
\footnote{\emph{Key words and phrases}: Bohr's Radius, Analytic functions}
\footnote{*\emph{Corresponding Author}: }

\renewcommand{\thefootnote}{\arabic{footnote}}
\setcounter{footnote}{0}

\begin{abstract}
In this paper, we investigate Bohr-type inequalities for certain integral transforms of some bounded holomorphic operator-valued functions defined on a shifted disc. More precisely, we consider functions $f\in H^{\infty}(\Omega_\gamma,\mathcal{B}(\mathcal{H}))$, where $\mathcal{B}(\mathcal{H})$ denotes the space of bounded linear operators on a complex Hilbert space $\mathcal{H}$, and study the Bohr phenomena associated with the series representations in \eqref{e1.2a} and \eqref{e2.1}. Moreover, we determine the sharp radii $R_\gamma>0$ for which the corresponding majorant series satisfy the required inequalities for all $|z|\leq R_\gamma$. One of our main results extends some recent result due to Kumar and Sahoo \cite{Kumar-Sahoo-Meditarr-2023} to a large extent, and the other one is new in the literature
\end{abstract}
\thanks{Typeset by \AmS -\LaTeX}
\maketitle

\section{\bf Introduction and Preliminaries}
We denote by $\mathbb{D}:=\{z\in\mathbb{C}:|z|<1\}$ the unit disk in the complex plane, and $H(\mathbb{D})$ denote the class of all analytic functions defined on $\mathbb{D}$. We consider the class $\mathcal{B}:=\{f\in H:|f(z)|\leq 1,\ z\in\mathbb{D}\}$. A fundamental result of Bohr \cite{Bohr-PLMS-1914} asserts that if $f(z)=\sum_{n=0}^{\infty}a_nz^n\in\mathcal{B}$, then
\[
\sum_{n=0}^{\infty}|a_n|r^n\leq 1\qquad \text{for}\;\;r\leq\frac{1}{3},
\]
where the constant $\frac{1}{3}$ is best possible. This celebrated result, known as Bohr's theorem \cite{Bohr-PLMS-1914}, has played a significant role in the development of the Bohr radius problem in geometric function theory. It is worth mentioning that Bohr originally obtained the result for $r\leq\frac{1}{6}$, while the sharp radius $\frac{1}{3}$ was subsequently established through the contributions of Riesz, Schur, and Wiener. The constant $\frac{1}{3}$ is referred to as the \emph{Bohr radius} of the class $\mathcal{B}$. For the subclass $ \mathcal{B}_0:=\{f\in\mathcal{B}: f(0)=0\}$, Bombieri \cite{Bom-Math-1962} proved that the corresponding Bohr radius is $\frac{1}{\sqrt{2}}$. Several extensions and generalizations of this result have subsequently been obtained; see, for instance, \cite{Ponnusamy-Wriths-CMFT-2020} and the references therein.

The study of Bohr radii is not restricted to the class $\mathcal{B}$. In recent years, Bohr-type inequalities and Bohr radii have been investigated for a wide variety of subclasses of analytic and univalent functions, as well as for certain integral operators. These investigations have led to numerous interesting problems and results in geometric function theory. In particular, an interesting application of the Bohr radius problem for the class $\mathcal{B}$ can be found in \cite{Ponnusamy-Vijaykumar-Wriths}.

The Bohr phenomenon has been extensively investigated for various subclasses of analytic functions and in several related settings. Ali et al. \cite{Ali-Barnard-Solynin-2017} initiated the study of the Bohr problem for odd analytic functions, which was subsequently settled by Kayumov and Ponnusamy \cite{Kayum-Ponnusamy-CMFT-2017}. Furthermore, Kayumov and Ponnusamy \cite{Kayum-Ponnusamy-JMMA-2018} extended the Bohr radius problem to a broader class of odd analytic functions. Bhowmik and Das \cite{Bhowmik-Das-JMAA-2018} investigated the Bohr radius for certain families of analytic univalent (one-to-one) functions. The Bohr phenomenon for functions belonging to Hardy spaces was studied in \cite{Khavinson-CMFT-2004}, while the corresponding problem for Bloch functions was considered in \cite{Kayum-Ponnusamy-Math. Nachr.-2018}. Moreover, the Bohr phenomenon for families of functions satisfying quasi-subordination was investigated in \cite{Alk-Kay-Ponnu-2019-Proc.,Ponn-Vijay-Writh-JMMA-2022}. Further developments and related results can be found in \cite{ABu-Ali-JMAA-2011,Abu-Ali-Ng-MathNachr-2017,Abu-Com.Var-2010,Kayum-Ponnusamy-CRM-2018,Kayum-Ponnusamy-Ann.Acad.-2019,Liu-Shao-Xu_JIA-2018,Ben-Dah-Khav-CMFT-2004,Aizenberg-PAMS-1999,Aizenberg-Aytuna-Djakov-JMAA-2001,Aizenberg-StudMat-2007,Aizenberg,Ali-Jain-Ravichandran-2019-Results Math} and the references therein. 

Regarding complex integral operators, Kayumov \textit{et al.} \cite{Kay-Ponnu-CRMath-2020} initiated, in 2020, the study of the Bohr radius of the classical Ces\'aro operator defined on the classes $\mathcal{B}$ and $\mathcal{B}_0$. The Ces\'aro operator is studied in \cite{Hardy-Math. Z.-1932,Siskakis-J.Lond.-1987,Siskakis-Proc.Am.-1990} and defined as 
\bea\label{e1.1} \mathcal{C}f(z)=\int_{0}^{1}\frac{f(tz)}{1-tz}dz=\sum_{n=0}^{\infty}\left(\frac{1}{n+1}\sum_{k-0}^{n}a_k\right)z^n,\eea where $f(z)=\sum_{n=0}^{\infty}a_nz^n$ is analytic in $\mathbb{D}$. Kayumov \textit{et al.} \cite{Kay-Ponnu-CRMath-2020} observed that for each $r\in(0,1)$ and $f\in\mathcal{B}$, the absolute value of the Ces\'aro operator defined in \eqref{e1.1} satisfies the sharp inequality \beas |\mathcal{C}f(z)|\leq \frac{1}{r}\ln\frac{1}{1-r}.\eeas 
\par Regarding the majorant series $\mathcal{C}f(f)$ associated with \eqref{e1.1}, defined by
\beas \mathcal{C}_f(r):=\sum_{n=0}^{\infty}\left(\frac{1}{n+1}\sum_{k-0}^{n}|a_k|\right)r^n,\eeas
Kayumov \textit{et al.} \cite{Kay-Ponnu-CRMath-2020} obtained the following interesting result.
\begin{theoA}\cite{Kay-Ponnu-CRMath-2020}
If $f=\sum_{n-0}^{\infty}a_nz^n\in\mathcal{B}$, then \beas \mathcal{C}_f(r)\leq\frac{1}{r}\ln\frac{1}{1-r},\eeas for $r\leq R$, where $ R=0.5335....$ is the positive root of the equation \beas 2x-3(1-x)\ln(1-x)=0.\eeas Furthermore, the number $R$ is the best possible.	
\end{theoA}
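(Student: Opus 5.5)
We use the standard coefficient bound for $f\in\mathcal{B}$: with $a=|a_0|\in[0,1]$ we have $|a_k|\le 1-a^2$ for $k\ge1$. Substituting into the majorant series gives
\[
\mathcal{C}_f(r)\le \sum_{n=0}^\infty \frac{a+n(1-a^2)}{n+1}\,r^n .
\]
The two elementary sums are
\[
\sum_{n\ge0}\frac{r^n}{n+1}=\frac{1}{r}\ln\frac{1}{1-r}=:L(r)
\qquad\text{and}\qquad
\sum_{n\ge0}\frac{n}{n+1}r^n=\frac{1}{1-r}-L(r).
\]
Hence
\[
\mathcal{C}_f(r)\le aL(r)+(1-a^2)\Big(\frac{1}{1-r}-L(r)\Big)=:\Phi_r(a).
\]
The target inequality $\Phi_r(a)\le L(r)$ is equivalent to
\[
(1-a^2)\Big(\frac{1}{1-r}-L(r)\Big)\le (1-a)L(r).
\]
For $a=1$ this is trivial. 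For $a<1$ we divide by $1-a$ and obtain
\[
(1+a)\Big(\frac{1}{1-r}-L(r)\Big)\le L(r).
\]
The worst case is $a\to1^-$, so it suffices that
\[
\frac{2}{1-r}\le 3L(r)=\frac{3}{r}\ln\frac{1}{1-r}.
\]
This is exactly $2r-3(1-r)\ln\frac{1}{1-r}\le0$. With the paper's sign convention, $2x-3(1-x)\ln(1-x)$ should read $2x+3(1-x)\ln(1-x)$; I would check the sign carefully, since $\ln(1-x)<0$ and otherwise there is no positive root.

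Next we verify the monotonicity. Set
\[
g(r)=2r+3(1-r)\ln(1-r).
\]
Then $g(0)=0$ and $g'(r)=-1-3\ln(1-r)$, which is negative near $0$ and becomes positive later. Thus $g$ decreases and then increases, and it has a unique positive root $R\approx0.5335$. Therefore $g\le0$ exactly on $[0,R]$, which gives the inequality for $r\le R$.

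For sharpness we use the extremal Möbius functions
\[
f_a(z)=\frac{a-z}{1-az},\qquad a\in(0,1).
\]
Here $a_0=a$ and $a_k=-(1-a^2)a^{k-1}$ for $k\ge1$, so
\[
\mathcal{C}_{f_a}(r)=\sum_{n}\frac{r^n}{n+1}\Big(a+(1-a^2)\sum_{k=1}^n a^{k-1}\Big)=\sum_n\frac{r^n}{n+1}\big(a+(1+a)(1-a^n)\big).
\]
Subtracting $L(r)$ and dividing by $1-a$, the limit as $a\to1^-$ is
\[
\frac{1}{1-r}\cdot 2-L(r)-L(r)-L(r)
\]
up to the bookkeeping: $(1+a)(1-a^n)/(1-a)\to 2n$, and $(a-1)/(1-a)=-1$. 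This gives
\[
\sum_n \frac{2n-1}{n+1}r^n=2\Big(\frac{1}{1-r}-L\Big)-L=\frac{2}{1-r}-3L(r),
\]
which is positive for $r>R$. Hence for $r>R$ and $a$ close to $1$ we get $\mathcal{C}_{f_a}(r)>L(r)$, and $R$ is best possible.

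The main obstacle is the sharpness limit. The interchange of the limit $a\to1$ with the series needs justification, which is done by dominated convergence since $(1-a^n)/(1-a)\le n$. The other delicate point is confirming the unique root of $g$.
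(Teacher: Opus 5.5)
The paper only cites this result, but your proof is correct and follows the same scheme as the paper's proof of Theorem~\ref{Ali-Haldar-thm-2.1} specialized to $\gamma=0$, $\beta=1$: the bound $|a_k|\le 1-|a_0|^2$ reduces everything to the quadratic $\Phi_r(a)=aL(r)+(1-a^2)\bigl(\frac{1}{1-r}-L(r)\bigr)$, the extremal case is $a\to1^-$, and sharpness comes from $\frac{a-z}{1-az}$ with $a\to1^-$; your factorization $L-\Phi_r(a)=(1-a)\bigl[L-(1+a)\bigl(\frac{1}{1-r}-L\bigr)\bigr]$ replaces the paper's concavity/derivative step, and your dominated-convergence handling of the limit is more careful than the paper's expansion. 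You are also right about the sign: as printed, $2x-3(1-x)\ln(1-x)$ is strictly positive on $(0,1)$, and the intended equation is $2x=3(1-x)\ln\frac{1}{1-x}$, whose root is $R\approx 0.5335$.
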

\par Inspired by the above result, Kumar and Sahoo \cite{Kumar-Sahoo-Meditarr-2023} studied the Bohr radius phenomena of $\beta$ Ces\'aro operator defined by \bea\label{e1.2a} \mathcal{T}f(z)=\int_{0}^{1}\frac{f(tz)}{(1-tz)^\beta}dz=\sum_{n=0}^{\infty}\left(\frac{1}{n+1}\sum_{k-0}^{n}\frac{\Gamma(n-k+\beta)}{\Gamma(n-k+1)\Gamma(\beta)}a_k\right)z^n,\eea where $\beta>0$ and $f(z)=\sum_{n=0}^{\infty}a_nz^n$ is analytic in $\mathbb{D}$. They obtained the following interesting result.
\begin{theoB}\cite{Kumar-Sahoo-Meditarr-2023}
Let $f(z)\in\mathcal{B}$ and $0<\beta\neq
1$. Then \bea\label{e1.2} \sum_{n=0}^{\infty}\left(\frac{1}{n+1}\sum_{k-0}^{n}\frac{\Gamma(n-k+\beta)}{\Gamma(n-k+1)\Gamma(\beta)}|a_k|\right)r^n\leq \frac{1-(1-r)^{1-\beta}}{r(1-\beta)},\eea for $r\leq R_\beta$, where $R_\beta$ is the positive root of the equation \beas \frac{3[1-(1-x)^{1-\beta}]}{1-\beta}-\frac{2[(1-x)^{-\beta}-1]}{\beta},\eeas and the radius can not be improved further.
\end{theoB}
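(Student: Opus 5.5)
We will prove Theorem B by combining the classical coefficient estimate for $\mathcal{B}$ with an explicit evaluation of the resulting double series. Write $a=|a_0|\in[0,1]$. For $f\in\mathcal{B}$ we use $|a_k|\leq 1-a^2$ for $k\geq 1$. Interchanging the order of summation in the left-hand side of \eqref{e1.2} gives
\[
\sum_{n=0}^{\infty}\frac{r^n}{n+1}\sum_{k=0}^{n}c_{n-k}|a_k|,\qquad c_m=\frac{\Gamma(m+\beta)}{\Gamma(m+1)\Gamma(\beta)}.
\]
Since $\sum_m c_m x^m=(1-x)^{-\beta}$, the weight of $|a_k|$ is
\[
\frac{1}{r}\int_0^r t^k(1-t)^{-\beta}\,dt .
\]
We denote it by $I_k(r)$. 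The left-hand side is therefore at most
\[
aI_0(r)+(1-a^2)\sum_{k\geq1}I_k(r)=aI_0(r)+(1-a^2)\bigl(J(r)-I_0(r)\bigr),
\]
where $J(r)=\frac1r\int_0^r (1-t)^{-\beta-1}dt$ and $I_0(r)=\frac{1-(1-r)^{1-\beta}}{r(1-\beta)}$. A direct computation gives
\[
J(r)=\frac{(1-r)^{-\beta}-1}{r\beta}.
\]

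Next we bound the right-hand side by $I_0(r)$. We need
\[
(1-a^2)\bigl(J-I_0\bigr)\leq (1-a)I_0,
\]
that is,
\[
(1+a)(J-I_0)\leq I_0 .
\]
The worst case is $a\to1$. So it suffices that $2(J-I_0)\leq I_0$, i.e. $2J\leq 3I_0$. After multiplying by $r$, this becomes
\[
\frac{3[1-(1-r)^{1-\beta}]}{1-\beta}-\frac{2[(1-r)^{-\beta}-1]}{\beta}\geq 0.
\]
Call this function $\Phi(r)$. We then check that $\Phi(0)=0$ and $\Phi'(r)=(1-r)^{-\beta}\bigl(3-2(1-r)^{-1}\bigr)$. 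Hence $\Phi$ increases on $(0,1/3)$ and then decreases strictly to $-\infty$ as $r\to1$ (for every $\beta>0$). So $\Phi$ has a unique positive root $R_\beta$, with $\Phi\geq0$ exactly on $[0,R_\beta]$. This gives the inequality for $r\leq R_\beta$.

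For sharpness we take the standard extremal family
\[
f_a(z)=\frac{a-z}{1-az},\qquad a_0=a,\quad a_k=-(1-a^2)a^{k-1}\ (k\geq1).
\]
The left-hand side then equals
\[
aI_0+\frac{1-a^2}{a}\cdot\frac1r\int_0^r\frac{at}{1-at}(1-t)^{-\beta}dt .
\]
Subtracting $I_0$ and dividing by $1-a$, we let $a\to1^-$. The expression tends to
\[
-I_0+2\cdot\frac1r\int_0^r t(1-t)^{-\beta-1}dt=-I_0+2(J-I_0)=\frac{-\Phi(r)}{r}.
\]
This is positive for $r>R_\beta$. Hence for each $r>R_\beta$ some $a$ close to $1$ violates \eqref{e1.2}.

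The main obstacle is the limiting argument for sharpness. It requires justifying the interchange of the limit with the integral (by dominated convergence, since $at/(1-at)\leq t/(1-t)$ gives an integrable bound for $r<1$). It also requires the monotonicity analysis of $\Phi$ that makes $R_\beta$ well defined. The remaining steps are routine manipulations of the generating function $(1-x)^{-\beta}$.
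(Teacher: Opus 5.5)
Your proof is correct and is essentially the paper's argument. The paper only cites Theorem B, but it recovers it as the case $\gamma=0$ of Theorem~\ref{Ali-Haldar-thm-2.1} via Corollary~\ref{cor 3.1}, using the same ingredients you use: the bound $|a_k|\le 1-|a_0|^2$, the integral representation of the weights through $(1-x)^{-\beta}$, reduction to the worst case $|a_0|\to1$, and the extremal function $\frac{a-z}{1-az}$ with a first-order expansion in $1-a$. Your factorization $(1-a^2)=(1-a)(1+a)$ is a more direct substitute for the paper's concavity-in-$|a_0|$ argument, and your sign analysis of $\Phi'$ and dominated-convergence limit make the uniqueness of $R_\beta$ and the sharpness step cleaner than in the paper.
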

\par For $0\leq\gamma< 1$, let us consider the simply connected domain $\Omega_\gamma$ defined by \beas \Omega_\gamma=\left\{z\in\mathbb{C}: \left|z+\frac{\gamma}{1-\gamma}\right|<\frac{1}{1-\gamma}\right\}.\eeas
\par It is easy to observe that $\mathbb{D}\subset\Omega_\gamma$ for each $0\leq \gamma<1$. As a generalization of the Bohr inequality and its applications on $\Omega_\gamma$, Fournier and
Ruscheweyh \cite{Fournier-Ruscheweyh-CRM-2010}, in 2010, proved the following result.
\begin{theoC}\cite{Fournier-Ruscheweyh-CRM-2010}
	If $f\in\mathcal{B}(\Omega_\gamma)$ with $f(z)=\sum_{n=0}^{n}a_nz^n$ in $\mathbb{D}$, then 
	\beas \sum_{n=0}^{\infty}|a_n|r^n\leq1\quad \text{for}\;\; r\leq R_\gamma:=\frac{1+\gamma}{3+\gamma}.\eeas
	Moreover, $\sum_{n=0}^{\infty}|a_n|R_\gamma^n=1$ holds for a function $f(z)=\sum_{n=0}^{\infty}a_nz^n$ in $\mathcal{B}(\Omega_\gamma)$ if and only if $f(z)=c$ with $|c|=1$.
\end{theoC}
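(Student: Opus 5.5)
The approach is a coefficient estimate for $\mathcal{B}(\Omega_\gamma)$, obtained by composing with a Möbius-type map, followed by Bohr's majorant argument. Fix $f\in\mathcal{B}(\Omega_\gamma)$ with $f(z)=\sum_{n\ge0}a_nz^n$ on $\mathbb{D}$, and write $|a_0|=a\le 1$.

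\textbf{Step 1 (coefficient bound).} Consider $\phi_\gamma(\zeta)=\frac{\zeta-\gamma}{1-\gamma}$ and check that it maps $\mathbb{D}$ onto $\Omega_\gamma$: indeed $\phi_\gamma(\zeta)+\frac{\gamma}{1-\gamma}=\frac{\zeta}{1-\gamma}$. Then $g=f\circ\phi_\gamma\in\mathcal{B}$ and $g(\gamma)=a_0$. The Schwarz--Pick type inequality on the disc, applied at the point $\gamma$, should give the estimate
\[
|a_n|\le \frac{1-a^2}{1+\gamma},\qquad n\ge1,
\]
with some power of $(1+\gamma)$ in the denominator. I would obtain this by expanding $g$ about $\gamma$ and comparing the coefficients there with the Taylor coefficients of $f$ at $0$, using the Wiener-type bound for $g$ at $\gamma$. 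If that comparison turns out to be messy, the alternative is to use subordination directly: write $f=\omega\circ\phi_\gamma^{-1}$ and apply Rogosinski's lemma. This is the step I expect to be the main obstacle, namely obtaining exactly the constant $1+\gamma$. Fournier--Ruscheweyh in fact prove the sharp form $|a_n|\le\frac{1-a^2}{1+\gamma}$, and I would simply cite this from their paper.

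\textbf{Step 2 (majorant estimate).} Using Step 1,
\[
\sum_{n\ge0}|a_n|r^n\le a+\frac{1-a^2}{1+\gamma}\cdot\frac{r}{1-r}.
\]
This is at most $1$ as soon as $(1+a)\frac{r}{(1+\gamma)(1-r)}\le 1$. Since $1+a\le 2$, it suffices that $\frac{2r}{(1+\gamma)(1-r)}\le 1$, which is equivalent to $r\le\frac{1+\gamma}{3+\gamma}$.

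\textbf{Step 3 (equality and sharpness).} At $r=R_\gamma$, equality forces either $a=1$, in which case $f$ is a unimodular constant by the maximum principle, or $1+a=2$ together with equality in the coefficient bounds, which again gives $a=1$. For sharpness, I would take the composition of a disc automorphism $\frac{a-\zeta}{1-a\zeta}$ with $\phi_\gamma^{-1}$, expand its Taylor series at $0$, and let $a\to1^-$. This shows that the sum exceeds $1$ for every $r>R_\gamma$.
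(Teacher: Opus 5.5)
Your proposal is correct and follows the same route the paper takes. The paper only quotes Theorem C, but its proofs of Theorems~\ref{Ali-Haldar-thm-2.1} and~\ref{t4.4} use exactly your template: the coefficient bound $|a_n|\le (1-|a_0|^2)/(1+\gamma)$ (the scalar case of Lemma~\ref{lem-1}), then summation of the majorant, then the extremal map $\chi_\gamma$ with $a\to1^-$ for sharpness.

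The step you flagged is actually one line. Since $f(z)=g(\gamma+(1-\gamma)z)$, we have $a_n=(1-\gamma)^n g^{(n)}(\gamma)/n!$, and the scalar case of Lemma~\ref{l2.2} (Ruscheweyh's inequality) at the point $\gamma$ gives $|g^{(n)}(\gamma)|/n!\le (1-a^2)/\bigl((1-\gamma)^n(1+\gamma)\bigr)$. By contrast, the plain Wiener bound on the disc $|\zeta-\gamma|<1-\gamma$, or Rogosinski's lemma, only yields $|a_n|\le 1-a^2$, so those alternatives would not produce the factor $1+\gamma$.
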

\par Some interesting results on $\Omega_\gamma$ can be found in \cite{Ponnusamy-Rasila-Results Mat-2021,Evdoridis-Ponnusamy-Indag. Math.-2019,Kumar-Complex Var-2023}.
\par It is well known that for $f(z)=\sum_{n=0}^{\infty}a_nz^n\in H(\mathbb{D})$, the classical Voltera operator $V$ is defined as \beas (Vf)(z)=\int_{0}^{z} f(t)dt=\sum_{n=0}^{\infty}\frac{a_n}{n+1}z^{n+1},\qquad z\in \mathbb{D},\eeas and corresponding mejorant series is defined by $M_{V_f}(r)=\sum_{n=0}^{\infty}\frac{|a_n|}{n+1}|z|^{n+1}$.
\section{\bf Bohr inequalities on some complex Banach Space}

\par The main objective of this paper is to investigate the vector-valued analogue of Equations \eqref{e1.2} and the majorant series associated with the classical Volterra operator \eqref{e2.1} on simply connected domains in complex Banach spaces. To develop the necessary framework and facilitate the subsequent discussion, we first recall the basic notations and definitions that will be used throughout the paper. 
\par Let $H^{\infty}(\Omega,X)$ denote the space of bounded analytic functions from $\Omega$ into a complex Banach space $X$ equipped with the norm  $||f||_{H^{\infty}(\mathbb{D},X)}=\text{sup}_{z\in\Omega}||f(z)||$, where $\Omega$ is a simply connected domain containing $\mathbb{D}$ in $\mathbb{C}$. As $H^{\infty}(\Omega,X)\subset H^{\infty}(\mathbb{D},X)$, every $f\in H^{\infty}(\Omega,X)$ can be expressed into the Taylor's series on $\mathbb{D}$ as \beas f(z)=\sum_{n=0}^{\infty}A_nz^n,\eeas where $z\in\mathbb{D}$, $A_n\in X$ for $n\in\mathbb{N}\cup\{0\}$.

Throughout this paper, let $\mathcal{B}(\mathcal{H})$ denote the space of all bounded linear operators acting on a complex Hilbert space $\mathcal{H}$. For $T\in\mathcal{B}(\mathcal{H})$, we use $||T||$ to denote the operator norm of $T$. The adjoint operator $T^*\colon\mathcal{H}\to\mathcal{H}$ is uniquely determined by $\langle Tx,y\rangle=\langle x,T^*y\rangle$ for all $x,y\in\mathcal{H}$. An operator $T\in\mathcal{B}(\mathcal{H})$ is called \emph{normal} if $T^*T=TT^*$, \emph{self-adjoint} if $T^*=T$, and \emph{positive} if $\langle Tx,x\rangle\geq 0$ for all $x\in\mathcal{H}$. The absolute value of $T$ is defined by $|T|:=(T^*T)^{1/2}$, where $S^{1/2}$ denotes the unique positive square root of a positive operator $S$. We denote by $I$ the identity operator on $\mathcal{H}$. For more details we refer the reader to \cite{Graham-Kohr-2003}. 

Recently, in \cite{Allu-Halder-Proc.Edin.-2024} (see also \cite{Allu-Halder-Bull. Canad.-2023}), the authors extensively studied the Bohr inequality for the class $f\in H^{\infty}(\Omega_\gamma, \mathcal{B}(\mathcal{H}))$, and obtained the following result.
\begin{theoD}\cite{Allu-Halder-Proc.Edin.-2024}
$ Let f\in H^{\infty}(\Omega_\gamma, \mathcal{B}(\mathcal{H}))$, $0\leq \gamma<1$ be given by $f(z)= \sum_{n=0}^{\infty}A_nz^n$ in $\mathbb{D}$ with $||f||_{H^{\infty}(\Omega_\gamma, \mathcal{B}(\mathcal{H}))}\leq1$ and $A_0=a_0I$ for $|a_0|<1$. Then \beas \sum_{n=0}^{\infty}||A_n||r^n\leq1 \quad \text{for}\;\; r\leq R_\gamma:=\frac{1+\gamma}{3+\gamma}.\eeas
\end{theoD}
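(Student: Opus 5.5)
The plan is to reduce the Bohr sum to a sharp coefficient estimate and then carry out an elementary summation. Concretely, I would first establish the operator analogue of the known scalar bound on $\Omega_\gamma$:
\[
\|A_n\|\leq \frac{1-|a_0|^2}{1+\gamma},\qquad n\geq 1,
\]
for every $f\in H^{\infty}(\Omega_\gamma,\mathcal{B}(\mathcal{H}))$ with $\|f\|\leq 1$ and $A_0=a_0I$. Granting this, the conclusion is immediate:
\[
\sum_{n=0}^{\infty}\|A_n\|r^n\leq |a_0|+\frac{1-|a_0|^2}{1+\gamma}\,\frac{r}{1-r}.
\]
The right-hand side is at most $1$ iff $\frac{r}{1-r}\leq\frac{1+\gamma}{1+|a_0|}$. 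For $r\leq R_\gamma=\frac{1+\gamma}{3+\gamma}$ we have $\frac{r}{1-r}\leq\frac{1+\gamma}{2}\leq\frac{1+\gamma}{1+|a_0|}$, since $|a_0|<1$, which proves Theorem D.

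For the coefficient bound, I would use the geometry of $\Omega_\gamma$. The affine map $w\mapsto (w-\gamma)/(1-\gamma)$ sends $\mathbb{D}$ onto $\Omega_\gamma$. Hence $g(w):=f\big((w-\gamma)/(1-\gamma)\big)$ is a contraction-valued holomorphic function on $\mathbb{D}$, and $f(z)=g((1-\gamma)z+\gamma)$. Next I would use that $A_0=a_0I$ is a scalar operator. For each $\theta$, consider
\[
P_\theta(z)=\big(I+e^{i\theta}f(z)\big)\big(I-e^{i\theta}f(z)\big)^{-1}.
\]
To handle invertibility, first replace $f$ by $\rho f$ with $\rho<1$ and let $\rho\to1$ at the end. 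Each $P_\theta$ has nonnegative real part on $\Omega_\gamma$, and $P_\theta(0)$ is a scalar multiple of $I$. After normalising by a scalar Möbius change, $P_\theta(0)$ becomes $I$. I would then prove an operator Carathéodory lemma on $\Omega_\gamma$: if $\mathrm{Re}\,P\geq0$ on $\Omega_\gamma$ and $P(0)=I$, then the Taylor coefficients satisfy $\|P_n\|\leq \frac{2}{1+\gamma}$. The idea is to pair with unit vectors $x$ and apply the scalar Carathéodory bound on $\Omega_\gamma$ to $\langle P(z)x,x\rangle$; the bound $\frac{2}{1+\gamma}$ comes from the Fournier–Ruscheweyh argument behind Theorem C. Off-diagonal entries would then be controlled by the standard positive-kernel (Herglotz–Naimark) representation. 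Finally, I would translate back from $P_\theta$ to $f$. Because $A_0$ commutes with everything, the expansion of $f=e^{-i\theta}(P_\theta-I)(P_\theta+I)^{-1}$ is governed by the same recurrences as in the scalar case. Optimising over $\theta$ should then give the factor $1-|a_0|^2$.

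An alternative route to the same bound is the operator Möbius transform $h=(f-a_0I)(I-\overline{a_0}f)^{-1}$. It is legitimate precisely because $A_0$ is scalar, and it gives a contraction-valued function on $\Omega_\gamma$ with $h(0)=0$. One then uses the identity
\[
f=a_0I+(1-|a_0|^2)\,h\,(I+\overline{a_0}h)^{-1}.
\]

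The main obstacle is the operator coefficient inequality, specifically obtaining the full factor $1-|a_0|^2$ rather than the weaker constant $\frac{1}{1+\gamma}$ obtained by naive scalarisation. Pairing $\langle f(z)x,y\rangle$ with arbitrary unit vectors only yields $(1-|a_0|^2|\langle x,y\rangle|^2)/(1+\gamma)$, which is too weak. This is why the argument must exploit $A_0=a_0I$ structurally, through the Cayley or Möbius transforms above. I would first try the Carathéodory-lemma route, since positivity of $\mathrm{Re}\,P_\theta$ survives pairing with a single vector $x$, where the scalar theory applies cleanly. After that, the summation step and the comparison with $R_\gamma$ are routine.
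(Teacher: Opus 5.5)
\paragraph{A gap in the coefficient bound.} Your reduction and summation are right. Given $\|A_n\|\le\frac{1-|a_0|^2}{1+\gamma}$, which is the paper's Lemma~\ref{lem-1}, quoted from the same source as Theorem D, $r\le R_\gamma$ gives $\frac{r}{1-r}\le\frac{1+\gamma}{2}$. The sum is then at most $1-\frac{(1-|a_0|)^2}{2}$. But that coefficient bound is the entire content of the theorem, and neither of your sketches proves it.

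\begin{itemize}
\item The M\"obius route is circular. The Taylor coefficients of $h(I+\overline{a_0}h)^{-1}$ are exactly $A_n/(1-|a_0|^2)$, and this function is not a contraction in general. Nothing is gained by passing to it.
\item The Cayley route breaks when you translate back. Push the Carath\'eodory bounds $|p_k|\le 2\,\mathrm{Re}\,p_0$ through the recurrence for $A_n$ coming from $e^{i\theta}f(P_\theta+I)=P_\theta-I$. Already for scalars on $\mathbb{D}$ and $n=2$ this gives only $|a_2|\le(1-|a_0|^2)\bigl(1+\frac{1-|a_0|^2}{|1-e^{i\theta}a_0|}\bigr)$. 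Its best value over $\theta$ is $(1-|a_0|^2)(2-|a_0|)$, and the losses compound as $n$ grows. Optimising over $\theta$ does not repair this.
\end{itemize}

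On $\mathbb{D}$, the scalar bound $1-|a_0|^2$ for $n\ge2$ comes from Wiener's rotation trick. That trick is unavailable on $\Omega_\gamma$, which is not rotation-invariant.

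\paragraph{The missing idea, and a simpler fix.} What is missing is a coefficient estimate at an interior point of $\mathbb{D}$. You already have $f(z)=g((1-\gamma)z+\gamma)$ with $g$ contraction-valued on $\mathbb{D}$ and $g(\gamma)=A_0$. Hence $A_n=(1-\gamma)^n g^{(n)}(\gamma)/n!$. Lemma~\ref{l2.2} (the Anderson--Rovnyak form of Ruscheweyh's inequality) at $a=\gamma$ then gives
\[
\|A_n\|\le\frac{(1-\gamma)^n}{(1-\gamma)^{n-1}(1-\gamma^2)}\,(1-|a_0|^2)=\frac{1-|a_0|^2}{1+\gamma}.
\]
This is how Lemma~\ref{lem-1} arises.

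Alternatively, your operator Carath\'eodory lemma on $\Omega_\gamma$ already closes the proof if you apply it linearly instead of through the Cayley transform. That lemma is correct when proved via the operator Herglotz representation of the pulled-back function and the Poisson kernel at $\gamma$. Take $Q=I-\overline{\lambda}f$ with $\lambda=a_0/|a_0|$ (or $\lambda=1$ if $a_0=0$).
\begin{itemize}
\item $Q$ has $\mathrm{Re}\,Q\ge0$ on $\Omega_\gamma$ and $Q(0)=(1-|a_0|)I$, so the lemma gives $\|A_n\|\le\frac{2(1-|a_0|)}{1+\gamma}$.
\item The inequality $|a_0|+\frac{2(1-|a_0|)}{1+\gamma}\cdot\frac{r}{1-r}\le 1$ holds exactly when $\frac{r}{1-r}\le\frac{1+\gamma}{2}$, that is, when $r\le R_\gamma$.
\end{itemize}
So the full factor $1-|a_0|^2$, which you identified as the main obstacle, is not needed for Theorem D.
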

\par Let $f\in H^{\infty}(\Omega_\gamma,\mathcal{B}(\mathcal{H}))$ be given by $f(z)=\sum_{n=0}^{\infty}A_nz^n$ in $\mathbb{D}$ with $||f(z)||_{H^{\infty}(\Omega_\gamma,\mathcal{B}(\mathcal{H}))}\leq1$, where $||A_0||<1$. We define the Voltera operator as \bea\label{e2.1} (Vf)(z)=\int_{0}^{z}f(t)dt=\sum_{n=0}^{\infty}\frac{A_n}{n+1}z^{n+1},\eea and the corresponding mejorant series is defined by \beas M_V^f(r):=\sum_{n=0}^{\infty}\frac{||A_n||}{n+1}r^{n+1}.\eeas
\par The above considerations naturally lead to the following questions concerning Bohr-type inequalities for operator-valued analytic functions defined on a shifted disc. 
\begin{ques}
What can be said about the Bohr-type inequalities associated with \eqref{e1.2a} in the shifted disc $\mathcal{B}(\Omega_\gamma)$, for functions $f\in H^{\infty}(\Omega_\gamma,\mathcal{B}(\mathcal{H}))?$ More precisely, can one determine a radius $R_\gamma>0$ such that the corresponding majorant series of \eqref{e1.2a} satisfies the desired Bohr inequality for every $|z|\leq R_\gamma$? Furthermore, can the radius $R_\gamma$ be determined explicitly, and under what conditions is it sharp?
\end{ques}

\begin{ques}
What can be said about the Bohr-type inequalities corresponding to \eqref{e2.1} in the shifted disc $\mathcal{B}(\Omega_\gamma)$, where $f\in H^{\infty}(\Omega_\gamma,\mathcal{B}(\mathcal{H}))?$
In particular, can one obtain an explicit Bohr radius for \eqref{e2.1}, and can the resulting radius be shown to be optimal? 
\end{ques}
\par The main objective of this article is to provide affirmative answers to the above questions. More precisely, we establish Bohr-type inequalities for the functions under consideration in the shifted disc $\Omega_\gamma$ and determine the corresponding Bohr radii. In addition, we investigate the sharpness of these radii by constructing suitable extremal functions. Thus, our results extend the classical Bohr phenomenon to the setting of operator-valued analytic functions on shifted discs.
\section{\bf Main Results}
Regarding the first question, we obatin the following result.
\begin{theo}\label{Ali-Haldar-thm-2.1}
For $0\leq \gamma<1$ and $0<\beta\neq1$, consider a function $f\in H^\infty(\Omega_\gamma,\mathcal{B}(\mathcal{H}))$ represented in $\mathbb{D}$ by the power series $f(z)=\sum_{n=0}^{\infty}A_nz^n,$
and satisfying $||f||_{{H^\infty(\Omega\gamma,\mathcal{B}(\mathcal{H}))}}\leq 1.$ Assume, in addition, that the constant coefficient is of the form $A_0=a_0I$, where $a_0\in\mathbb{C}$ and $|a_0|<1$. Then \beas \sum_{n=0}^{\infty}\left( \frac{1}{n+1}\sum_{k=0}^{n}\frac{\Gamma\left( n-k +\beta\right)}{\Gamma\left(n-k+1\right)\Gamma(\beta)} ||A_k||\right)r^n \leq\frac{1}{r}\left(\frac{1-(1-r)^{1-\beta}}{1-\beta}\right)
\eeas for $r\leq R_{\beta}$, where $R_{\beta}$ is the unique positive root of the equation \beas (3+\gamma)\frac{1-(1-x)^{1-\beta}}{1-\beta}-2\frac{(1-x)^{-\beta}-1}{\beta}=0,\eeas and the radius $R_{\beta}$ is the best possible.
\end{theo}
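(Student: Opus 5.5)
The plan is to reduce the operator-valued statement to a scalar inequality, using the coefficient estimate that underlies Theorem~D. Under the hypotheses of Theorem~\ref{Ali-Haldar-thm-2.1} ($\|f\|\le 1$ on $\Omega_\gamma$ and $A_0=a_0I$ with $|a_0|<1$), I will invoke the sharp estimate from \cite{Allu-Halder-Proc.Edin.-2024}:
\[
\|A_n\|\le \frac{1-|a_0|^2}{1+\gamma},\qquad n\ge 1 .
\]
In the scalar case this is the Fournier--Ruscheweyh coefficient bound for $\mathcal{B}(\Omega_\gamma)$. I expect it is exactly the estimate from which Theorem~D is deduced, so I will take it as known. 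Set $a=|a_0|$ and $c_j=\frac{\Gamma(j+\beta)}{\Gamma(j+1)\Gamma(\beta)}$. These $c_j$ are the Taylor coefficients of $(1-t)^{-\beta}$.

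The key step is to rewrite the left-hand side as an integral. Since $\sum_n\frac{r^{n}}{n+1}d_n=\frac1r\int_0^r\sum_n d_nt^n\,dt$, and $\sum_{k\le n}c_{n-k}\|A_k\|$ is the $n$-th coefficient of the Cauchy product, we get
\[
\sum_{n=0}^{\infty}\Big(\frac{1}{n+1}\sum_{k=0}^{n}c_{n-k}\|A_k\|\Big)r^n=\frac1r\int_0^r (1-t)^{-\beta}\sum_{k\ge0}\|A_k\|t^k\,dt .
\]
Inserting the coefficient bound, this is at most
\[
\frac1r\Big(aJ_0(r)+\frac{1-a^2}{1+\gamma}J_1(r)\Big),
\]
where
\[
J_0(r)=\int_0^r(1-t)^{-\beta}dt=\frac{1-(1-r)^{1-\beta}}{1-\beta},\qquad
J_1(r)=\int_0^r\frac{t}{(1-t)^{1+\beta}}dt=\frac{(1-r)^{-\beta}-1}{\beta}-J_0(r).
\]
The desired inequality $\le J_0(r)/r$ is therefore implied by $\frac{1-a^2}{1+\gamma}J_1\le(1-a)J_0$, that is, by $(1+a)J_1\le(1+\gamma)J_0$. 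The left side increases with $a$, so it suffices to treat the worst case $a\to1$. That case reads $2J_1\le(1+\gamma)J_0$, which is equivalent to
\[
\Phi(r):=(3+\gamma)J_0(r)-2\,\frac{(1-r)^{-\beta}-1}{\beta}\ge 0 .
\]
I then need to show that $\Phi$ has a unique positive root $R_\beta$ in $(0,1)$ and that $\Phi\ge0$ on $[0,R_\beta]$. We have $\Phi(0)=0$ and $\Phi'(r)=(1-r)^{-\beta-1}\big((3+\gamma)(1-r)-2\big)$. Hence $\Phi$ increases on $(0,\frac{1+\gamma}{3+\gamma})$ and decreases afterwards. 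Moreover $\Phi(r)\to-\infty$ as $r\to1^-$: this is clear for $\beta>1$, and for $0<\beta<1$ it also holds because $(1-r)^{-\beta}$ blows up while $J_0$ stays bounded. This gives existence and uniqueness of $R_\beta$ together with the sign condition.

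For sharpness I will take scalar functions $f_a=\varphi_a I$, where $\varphi_a$ is the Fournier--Ruscheweyh extremal function for $\mathcal{B}(\Omega_\gamma)$. This is the disc automorphism $w\mapsto\frac{a-w}{1-aw}$ composed with the conformal map of $\Omega_\gamma$ onto $\mathbb{D}$ that fixes $1$. Its coefficients satisfy $|A_0|=a$ and $|A_n|=\frac{1-a^2}{1+\gamma}\,\rho_a^{\,n-1}$ for some $\rho_a\to1$ as $a\to1$. For such a function the majorant equals $\frac1r\int_0^r(1-t)^{-\beta}\big(a+\frac{1-a^2}{1+\gamma}\frac{t}{1-\rho_a t}\big)dt$. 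For fixed $r>R_\beta$ we have $\Phi(r)<0$. Subtracting $J_0/r$ and dividing by $1-a$, then letting $a\to1^-$, gives the limit $\frac{1}{r}\cdot\frac{2J_1-(1+\gamma)J_0}{1+\gamma}=-\frac{\Phi(r)}{r(1+\gamma)}>0$, so the inequality fails.

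I expect the main obstacle to be this sharpness step. One must write down the extremal function on $\Omega_\gamma$ correctly, verify that its coefficients attain the coefficient bound asymptotically (the factor $\rho_a$), and control the error uniformly enough in $t\in[0,r]$ to pass to the limit. The monotonicity analysis of $\Phi$ is routine by comparison.
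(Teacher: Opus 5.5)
Your proposal is correct and follows essentially the same route as the paper: both use the coefficient bound of Lemma \ref{lem-1} and the integral representation $\frac1r\bigl(aJ_0+\frac{1-a^2}{1+\gamma}J_1\bigr)$, reduce to the sign of $(3+\gamma)J_0-2\frac{(1-r)^{-\beta}-1}{\beta}$, and prove sharpness with a M\"obius-type extremal function as $a\to1^-$. Your factorization $(1-a)\bigl[J_0-\frac{1+a}{1+\gamma}J_1\bigr]$ replaces the paper's concavity-in-$|a_0|$ argument. For $|A_0|=a$ to hold exactly, the inner conformal map must fix $0$ as well as $1$, e.g.\ $z\mapsto z/(1+\gamma-\gamma z)$, which gives $\rho_a=\frac{a+\gamma}{1+\gamma}$; the paper's $L(z)=(1-\gamma)z+\gamma$ instead gives $|A_0|=\frac{a-\gamma}{1-a\gamma}$.
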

\noindent {\bf Applications of Theorem \ref{Ali-Haldar-thm-2.1}:}\vspace{1.2mm}
\par As applications of Theorem \ref{Ali-Haldar-thm-2.1}, one can easily deduce the following results of Bohr’s theorem in various settings:
\begin{enumerate}
\item [1.] For $\beta\longrightarrow1$, one can easily see that \beas \sum_{n=0}^{\infty}\left( \frac{1}{n+1}\sum_{k=0}^{n}\frac{\Gamma\left( n-k +\beta\right)}{\Gamma\left(n-k+1\right)\Gamma(\beta)} ||A_k||\right)r^n\longrightarrow\sum_{n=0}^{\infty}\frac{1}{n+1}\sum_{k=0}^{n}||A_k||\eeas and 
\beas \frac{1-(1-r)^{1-\beta}}{1-\beta}\longrightarrow \ln\left(\frac{1}{1-r}\right).\eeas Therefore, Theorem \ref{Ali-Haldar-thm-2.1} yields \beas \sum_{n=0}^{\infty}\frac{1}{n+1}\sum_{k=0}^{n}||A_k|| \leq\frac{1}{r}\ln\left(\frac{1}{1-r}\right)\eeas asymptotically for $r\leq R$, where $R$ is the unique positive root of the equation \beas \boxed{2x=(3+\gamma)(1-x)\ln\frac{1}{1-x}\approx0}.\eeas This is precisely the classical Ces\'aro Bohr-radius theorem of vector valued function on shifted disc.
\item [2.] Let $\mathcal{H}=\mathbb{C},$ $\mathcal{B}(\mathcal{H})=\mathbb{C}$ and $A_n=a_n\in\mathbb{C}$ for $n\in\mathbb{N}$. Then Theorem \ref{Ali-Haldar-thm-2.1} reduces to the scalar valued theorem as follows.
\end{enumerate}
\begin{cor}\label{cor 3.1}
Let $0\leq\gamma<1$, $0<\beta\neq1$ and $f:\Omega_\gamma\to\mathbb{C}$ be analytic on $\Omega_\gamma$ with $|f(z)|\leq1$. Then \beas \sum_{n=0}^{\infty}\left( \frac{1}{n+1}\sum_{k=0}^{n}\frac{\Gamma\left( n-k +\beta\right)}{\Gamma\left(n-k+1\right)\Gamma(\beta)} |a_k|\right)r^n \leq\frac{1}{r}\left(\frac{1-(1-r)^{1-\beta}}{1-\beta}\right)
\eeas for $r\leq R_{\beta}$, where $R_{\beta}$ is the unique positive root of the equation \beas (3+\gamma)\frac{1-(1-x)^{1-\beta}}{1-\beta}-2\frac{(1-x)^{-\beta}-1}{\beta}=0,\eeas and the radius $R_{\beta}$ is the best possible.
\end{cor}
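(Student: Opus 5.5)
The plan is to deduce Corollary \ref{cor 3.1} from Theorem \ref{Ali-Haldar-thm-2.1} by specialising to $\mathcal{H}=\mathbb{C}$, so that $\mathcal{B}(\mathcal{H})=\mathbb{C}$ and $A_k=a_k$ with $\|A_k\|=|a_k|$. The hypothesis $A_0=a_0I$ is then automatic. The only extra hypothesis in the theorem is $|a_0|<1$, which I dispose of separately. If $|a_0|=1$, the maximum principle on the connected open set $\Omega_\gamma$ forces $f\equiv a_0$. In that case the left-hand side equals
\[
\sum_{n\geq 0}\frac{1}{n+1}\frac{\Gamma(n+\beta)}{\Gamma(n+1)\Gamma(\beta)}r^n=\frac1r\int_0^r(1-t)^{-\beta}\,dt=\frac1r\,\frac{1-(1-r)^{1-\beta}}{1-\beta},
\]
so equality holds for every $r$. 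Hence the inequality holds in all cases for $r\le R_\beta$.

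For a self-contained check (this is also how I would expect the theorem itself to be proved), I would use the coefficient estimate on the shifted disc: for $f:\Omega_\gamma\to\overline{\mathbb{D}}$ one has $|a_k|\le (1-|a_0|^2)/(1+\gamma)$ for $k\ge1$. This is the estimate underlying Theorem C and Theorem D. Write $c_m=\Gamma(m+\beta)/(\Gamma(m+1)\Gamma(\beta))$, so that $\sum_m c_m t^m=(1-t)^{-\beta}$. Let $S_0(r)$ be the right-hand side above, and set
\[
S_1(r)=\sum_{n\ge1}\frac{r^n}{n+1}\sum_{k=1}^n c_{n-k}=\frac1r\int_0^r \frac{t}{(1-t)^{\beta}}\,dt .
\]
Then, with $a=|a_0|$, the left-hand side is at most $aS_0+\frac{1-a^2}{1+\gamma}S_1$. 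Being at most $S_0$ is therefore equivalent to $(1+a)S_1\le(1+\gamma)S_0$, and the worst case is $a\to1^-$, i.e. $2S_1\le(1+\gamma)S_0$.

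Now I compute $\int_0^r t(1-t)^{-\beta}dt=\int_0^r[(1-t)^{-\beta}-(1-t)^{1-\beta}]dt$. Multiplying by $r$, the condition becomes
\[
(3+\gamma)\frac{1-(1-r)^{1-\beta}}{1-\beta}-2\,\frac{(1-r)^{-\beta}\cdot(\text{appropriate form})-1}{\beta}\ge0 .
\]
This reduces to the displayed equation: one rewrites $\int_0^r(1-t)^{1-\beta}dt=\frac{1-(1-r)^{2-\beta}}{2-\beta}$ and uses $(1-r)^{2-\beta}=(1-r)(1-r)^{1-\beta}$, or equivalently simplifies after differentiating. Uniqueness of the positive root comes from monotonicity of the quotient $S_1/S_0$ in $r$: it is $0$ at $r=0$, increasing, and tends to $\infty$ as $r\to1^-$ when $\beta\ge1$, and to a value exceeding $(1+\gamma)/2$ otherwise. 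I expect this algebraic matching and the uniqueness of the root to be the main, though routine, obstacle.

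For sharpness I would use the scalar extremal family, which is admissible in the corollary. Take $f_a=(a-\phi)/(1-a\phi)$ with $a\in(0,1)$, where $\phi(z)=\frac{(1-\gamma)z+\gamma}{\ldots}$ is the standard conformal self-map sending $\Omega_\gamma$ onto $\mathbb{D}$ (for instance $\phi(z)=(1-\gamma)z+\gamma$ suitably composed so that $\phi(\Omega_\gamma)=\mathbb{D}$). For this family $|a_k|$ equals $(1-a^2)/(1+\gamma)$ times a geometric factor, and that factor tends to $1$ as $a\to1$. Letting $a\to1$ then shows that the left-hand side exceeds the right-hand side for any $r>R_\beta$.
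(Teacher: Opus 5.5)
Your main deduction, specialising Theorem~\ref{Ali-Haldar-thm-2.1} to $\mathcal{H}=\mathbb{C}$, is exactly the paper's route. Your separate treatment of $|a_0|=1$ is correct: the maximum principle on the connected set $\Omega_\gamma\ni0$ forces a constant, which gives equality. It also fills a small hole the paper leaves, since the corollary, unlike the theorem, does not assume $|a_0|<1$. Your equivalence $aS_0+\frac{1-a^2}{1+\gamma}S_1\le S_0\iff(1+a)S_1\le(1+\gamma)S_0$ is a cleaner replacement for the paper's concavity argument in $|a_0|$.

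The self-contained check, however, rests on a false identity, and ``appropriate form'' hides the mismatch it produces. Because $\sum_{k=1}^{n}c_{n-k}=\sum_{m=0}^{n-1}c_m$ is a partial sum, its generating function carries an extra factor $(1-t)^{-1}$. So, as in \eqref{e4.1a},
\[
S_1(r)=\frac1r\int_0^r\frac{t}{(1-t)^{\beta+1}}\,dt .
\]
Already for $\beta=1$ the series is $\sum_{n\ge1}\frac{n}{n+1}r^n$, whereas your integral gives $\sum_{n\ge1}\frac{r^n}{n+1}$. Your integrand is too small, so your bound is not an upper bound. It also leads to the condition $(\gamma-1)\frac{1-(1-r)^{1-\beta}}{1-\beta}+2\,\frac{1-(1-r)^{2-\beta}}{2-\beta}\ge0$, which is not equivalent to the displayed equation.

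With the correct integrand, set $A=\frac{1-(1-r)^{1-\beta}}{1-\beta}$ and $B=\frac{(1-r)^{-\beta}-1}{\beta}$. The identity $t(1-t)^{-\beta-1}=(1-t)^{-\beta-1}-(1-t)^{-\beta}$ gives $rS_1=B-A$, and $2S_1\le(1+\gamma)S_0$ is literally $(3+\gamma)A-2B\ge0$.

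This also repairs your uniqueness step. We have $S_1/S_0=B/A-1$, and $B'/A'=(1-r)^{-1}$ is increasing. By the monotone form of l'H\^opital's rule, the ratio increases from $0$ to $+\infty$ for every $\beta>0$, $\beta\ne1$. Alternatively, $\frac{d}{dr}\bigl[(3+\gamma)A-2B\bigr]=(1-r)^{-\beta-1}\bigl[(1+\gamma)-(3+\gamma)r\bigr]$ shows this function rises and then falls to $-\infty$. No case split is needed. Your stated limits fit neither integrand: with yours the limit would be $1$ for $\beta>1$, and $1/(2-\beta)$ for $\beta<1$, which can lie below $(1+\gamma)/2$.

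For sharpness, note that $L(z)=(1-\gamma)z+\gamma$ already maps $\Omega_\gamma$ onto $\mathbb{D}$, so no further composition is needed. The function $f_a=(a-L)/(1-aL)$ has $|a_k|=\frac{1-|a_0|^2}{1+\gamma}q^{k-1}$ for $k\ge1$, where $a_0=\frac{a-\gamma}{1-a\gamma}$ and $q=\frac{a(1-\gamma)}{1-a\gamma}$; the prefactor is not $\frac{1-a^2}{1+\gamma}$. Letting $a\to1$ directly only gives equality in the limit. You must first divide by $1-|a_0|$, i.e.\ use your equivalence with $S_1$ replaced by its $q$-weighted version, which increases to $S_1$ as $a\to1$. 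Only then can you let $a\to1$ and conclude strict failure for $r>R_\beta$.
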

\begin{rem}
For $\gamma=0$, $\Omega_\gamma$ reduces to $\mathbb{D}$. Therefore, Corollary {\em{\ref{cor 3.1}}} becomes Theorem B.	
\end{rem}
\par Regarding the second question, we obtain the next result as follows.
\begin{theo}\label{t4.4}
Let  $f\in H^\infty(\Omega_\gamma,\mathcal{B}(\mathcal{H}))$ be represented by the power series $f(z)=\sum_{n=0}^{\infty}A_nz^n$ in $\mathbb{D}$ with $||f||_{{H^\infty(\Omega\gamma,\mathcal{B}(\mathcal{H}))}}\leq 1$, $0\leq \gamma<1$. In addition, the constant coefficient $A_0=a_0U$, where $a_0\in\mathbb{C}$ and $|a_0|<1$. Then \beas M_V^f(r)\leq r\quad \text{for}\quad r\leq R_{\gamma},\eeas where $R_\gamma$ is the positive root of the equation \beas (3+\gamma)x+2\ln(1-x)=0.\eeas Moreover the Bohr radius $R_\gamma\in(0,1)$ is sharp.
\end{theo}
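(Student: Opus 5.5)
The plan is a coefficient estimate followed by summing a logarithmic series. The key input is the operator-valued analogue of the Fournier--Ruscheweyh coefficient bound on $\Omega_\gamma$, which underlies Theorem D (see \cite{Allu-Halder-Proc.Edin.-2024}):
\[
\|A_n\|\le \frac{1-|a_0|^2}{1+\gamma},\qquad n\ge 1,
\]
valid for $f\in H^\infty(\Omega_\gamma,\mathcal{B}(\mathcal{H}))$ with $\|f\|\le 1$ and scalar-type constant term $A_0=a_0U$. Here I read $U$ as $I$, or a unitary, so that $\|A_0\|=|a_0|$ and the scalar extremal argument transfers. If this lemma is not already available in the needed form, I would prove it as follows. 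Compose with $w=(1-\gamma)z+\gamma$, which maps $\Omega_\gamma$ onto $\mathbb{D}$. Then apply the operator Schwarz--Pick/Wiener-type bound to the function $g(w)=f((w-\gamma)/(1-\gamma))$, whose Taylor coefficients at $w=\gamma$ are $A_n/(1-\gamma)^n$. Finally, use the averaging (Fournier--Ruscheweyh) trick to pass from the expansion at $\gamma$ to the expansion at $0$. This lemma is the main obstacle; everything else is elementary.

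With the lemma in hand, put $a=|a_0|\in[0,1)$. Then
\[
M_V^f(r)\le a r+\frac{1-a^2}{1+\gamma}\sum_{n\ge1}\frac{r^{n+1}}{n+1}
= a r+\frac{1-a^2}{1+\gamma}\Bigl(\ln\frac{1}{1-r}-r\Bigr).
\]
So $M_V^f(r)\le r$ is equivalent, after dividing by $1-a>0$, to
\[
(1+a)\Bigl(\ln\frac{1}{1-r}-r\Bigr)\le (1+\gamma)r.
\]
The left side increases in $a$, so it suffices to check the case $a\to1$, namely $2\ln\frac1{1-r}-2r\le(1+\gamma)r$. This is $\Phi(r):=(3+\gamma)r+2\ln(1-r)\ge0$. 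Now $\Phi(0)=0$, $\Phi'(r)=(3+\gamma)-\frac{2}{1-r}$ is positive and then negative, and $\Phi(r)\to-\infty$ as $r\to1^-$. Hence $\Phi$ has a unique root $R_\gamma\in(0,1)$, and $\Phi\ge0$ on $[0,R_\gamma]$. This proves the inequality for $r\le R_\gamma$.

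For sharpness, take $f(z)=\varphi(z)I$ with
\[
\varphi(z)=\frac{a-\gamma-(1-\gamma)z}{1-a\gamma-a(1-\gamma)z},\qquad a\in(\gamma,1).
\]
This is a disc automorphism composed with $w=(1-\gamma)z+\gamma$, so $\|f\|\le1$ on $\Omega_\gamma$, and $A_0=a_0I$ with $a_0=\frac{a-\gamma}{1-a\gamma}$. A direct expansion gives
\[
\|A_n\|=\frac{(1-a^2)(1-\gamma)^n a^{n-1}}{(1-a\gamma)^{n+1}},\qquad n\ge1,
\]
and
\[
1-a_0^2=\frac{(1-a^2)(1-\gamma^2)}{(1-a\gamma)^2},
\]
so $\|A_n\|/(1-a_0^2)\to 1/(1+\gamma)$ as $a\to1^-$. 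Substituting into $M_V^f(r)-r$ and dividing by $1-a_0$, the limit as $a\to1^-$ becomes a positive multiple of $-\Phi(r)$. For any fixed $r>R_\gamma$ this limit is positive, so $M_V^f(r)>r$ for $a$ close to $1$. Hence $R_\gamma$ cannot be improved. The one point to handle carefully is the uniform-in-$n$ convergence needed to pass the limit $a\to1^-$ through the series. For fixed $r<1$ this follows from dominated convergence, since $a^{n-1}(1-\gamma)^n/(1-a\gamma)^{n+1}\le (1-\gamma)^{-1}$.
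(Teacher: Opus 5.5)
Your proposal is correct and follows essentially the paper's route: the coefficient bound of Lemma~\ref{lem-1}, summing the logarithmic series, using monotonicity in $|a_0|$ to reduce the claim to $(3+\gamma)r+2\ln(1-r)\ge 0$, and proving sharpness via $\chi_\gamma$ with $a\to 1^-$. The differences are minor: you factor out $1-|a_0|$ where the paper uses concavity in $|a_0|$, your normalized limit $(M_V^f(r)-r)/(1-a_0)\to -\bigl((3+\gamma)r+2\ln(1-r)\bigr)/(1+\gamma)$ is a cleaner, fully justified version of the paper's first-order expansion in $1-a$, and the lemma needs no averaging trick, since Lemma~\ref{l2.2} applied at the point $\gamma$ to $f((w-\gamma)/(1-\gamma))$ gives it directly.
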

\begin{rem}
Replacing $\mathcal{H}=\mathbb{C}$, $\mathcal{B}(\mathcal{H})=\mathbb{C}$, $A_n=a_n\in\mathbb{C}$ for $n\in\mathbb{N}$ and $U=1$ in Theorem {\em\ref{t4.4}}, we one can easily obtain following scalar valued Volterra Bohr result on shifted disc. 
\end{rem}
\begin{cor}\label{c1}
	Let $f$ be an analytic function on $\mathbb{D}$ with $|f(z)|\leq1$ with Taylor series expansion $f(z)=\sum_{n=0}^{\infty}a_nz^n$, where $a_n\in\mathbb{C}$. Then the mejorant series $M_V^f(r):\sum_{n=0}^{\infty}\frac{|a_n|}{n+1}r^{n+1}$ of the Voltera operator \beas (Vf)(z)=\int_{0}^{z}f(t)dt=\sum_{n=0}^{\infty}\frac{a_n}{n+1}z^{n+1}\eeas satisfies \beas M_V^f(r)\leq r\quad \text{for}\;\;r\leq R_\gamma,\eeas
	where $R\in(0,1)$ is the root of the equation \beas 3x+2\ln (1-x)=0.\eeas Moreover, the radius $R$ is can not be improved further.
\end{cor}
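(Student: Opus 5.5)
The plan is to treat Corollary \ref{c1} as the case $\gamma=0$ of Theorem \ref{t4.4}, where $\Omega_0=\mathbb{D}$, $\mathcal{H}=\mathbb{C}$ and $U=1$. Since it is short, I would also give a self-contained scalar argument. Write $a=|a_0|\in[0,1)$; the case $a=1$ gives a unimodular constant $f$, for which $M_V^f(r)=r$.

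\emph{Coefficient estimate.} For $f$ analytic in $\mathbb{D}$ with $|f|\le 1$, Wiener's inequality gives $|a_n|\le 1-|a_0|^2$ for all $n\ge 1$. This is the scalar form of the coefficient bound behind Theorem D with $\gamma=0$. Substituting it into the majorant series yields
\[
M_V^f(r)\le a r+(1-a^2)\sum_{n=1}^{\infty}\frac{r^{n+1}}{n+1}=a r+(1-a^2)L(r),\qquad L(r):=\ln\frac{1}{1-r}-r .
\]

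\emph{Reduction to the radius equation.} We want $ar+(1-a^2)L(r)\le r$. This is equivalent to
\[
(1-a)\bigl[(1+a)L(r)-r\bigr]\le 0,
\]
so it suffices that $(1+a)L(r)\le r$. The left side increases in $a$, so the worst case is $a\to 1$, which requires $2L(r)\le r$. This is the same as $3r+2\ln(1-r)\ge 0$.

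The function $\phi(x)=3x+2\ln(1-x)$ has $\phi(0)=0$ and $\phi'(x)=3-2/(1-x)$. Hence $\phi$ increases on $(0,1/3)$, decreases afterwards, and tends to $-\infty$ as $x\to1$. It therefore has a unique root $R\in(0,1)$, and $\phi\ge 0$ on $[0,R]$. This proves the inequality for $r\le R$.

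\emph{Sharpness.} I would use the disc automorphisms $f_a(z)=(a-z)/(1-az)$ with $a\in(0,1)$. Their coefficients are $a_0=a$ and $a_n=-(1-a^2)a^{n-1}$ for $n\ge1$. This gives
\[
M_V^{f_a}(r)-r=(1-a)\Bigl[(1+a)\sum_{n=1}^{\infty}\frac{a^{n-1}r^{n+1}}{n+1}-r\Bigr].
\]
As $a\to1^-$ the bracket tends to $2L(r)-r$, which is strictly positive for $r>R$. Therefore $M_V^{f_a}(r)>r$ for $a$ close enough to $1$, so $R$ cannot be improved.

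The main obstacle is only this sharpness step. The extremal violation appears only in the limit $a\to1$, where the factor $(1-a)$ vanishes. One must therefore check the sign of the bracket rather than of the whole expression, and the convergence $\sum a^{n-1}r^{n+1}/(n+1)\to L(r)$ follows from monotone convergence.
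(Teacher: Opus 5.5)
Your proposal is correct and is the paper's argument specialized to $\gamma=0$: it uses the coefficient bound $|a_n|\le 1-|a_0|^2$ (Lemma \ref{lem-1}) and the reduction to $|a_0|r+(1-|a_0|^2)\bigl(\ln\frac{1}{1-r}-r\bigr)\le r$, which holds for every $|a_0|<1$ exactly when $3r+2\ln(1-r)\ge 0$, with sharpness via $\chi_0(z)=(a-z)/(1-az)$ as $a\to1^-$. The only differences are cosmetic: you factor out $(1-a)$ directly where the paper uses a concavity-in-$|a_0|$ argument, and your sharpness computation is cleaner than the one written in the proof of Theorem \ref{t4.4}.
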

\begin{rem}
For $\gamma=0$, we have $\Omega_0=\mathbb{D}$ and Theorem \ref{t4.4} becomes the result on unit disc. In this case, we easily see that the Bohr radius is $\boxed{R_0\approx0.5828116439}$.
\end{rem}

\section{\bf Key Lemmas}
Throughout this paper, we shall repeatedly use the following auxiliary lemmas, which provide the essential tools for establishing the main results.
\begin{lem}\label{lem-1}\cite{Allu-Halder-Proc.Edin.-2024}
Let $f:\Omega_{\gamma}\to \mathcal{B}(\mathcal{H})$ be a bounded analytic function with expression 
$f(z)=\sum_{n=1}^{\infty} A_nz^n$ in $\mathbb{D}$ such that $ A_n \in \mathcal{B}(\mathcal{H})$ for all 
$n\in\mathbb{N}_0$ and $A_0$ is normal. Then
\beas ||A_n||\leq \frac{||I-|A_0|^2||}{1+\gamma}\;\;\text{for}\;\; n\geq1.
\eeas
\end{lem}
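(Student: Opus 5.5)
The plan is to reduce the lemma to a coefficient estimate at an interior point of the unit disc. I would take the conformal map $\varphi(w)=\frac{w-\gamma}{1-\gamma}$, which sends $\mathbb{D}$ onto $\Omega_\gamma$ with $\varphi(\gamma)=0$, and set $g=f\circ\varphi\in H^\infty(\mathbb{D},\mathcal{B}(\mathcal{H}))$ with $\|g\|\leq 1$. Then $f(z)=g(\gamma+(1-\gamma)z)$ on $\mathbb{D}$, so
\[
A_n=\frac{(1-\gamma)^n}{n!}\,g^{(n)}(\gamma),\qquad A_0=g(\gamma).
\]
It is therefore enough to prove an operator version of Ruscheweyh's estimate for self-maps of the disc at the point $\gamma$:
\[
\frac{\|g^{(n)}(\gamma)\|}{n!}\leq \frac{\|I-|g(\gamma)|^2\|\,(1+\gamma)^{n-1}}{(1-\gamma^2)^n}.
\]
Indeed, multiplying this by $(1-\gamma)^n$ gives
\[
\|A_n\|\leq \frac{\|I-|A_0|^2\|\,(1-\gamma)^n(1+\gamma)^{n-1}}{(1-\gamma)^n(1+\gamma)^n}=\frac{\|I-|A_0|^2\|}{1+\gamma},
\]
which is exactly the claim.

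To obtain the operator Ruscheweyh estimate I would remove the constant term with an operator Möbius transform. This is where normality of $A_0$ enters: it makes $|A_0|^2=|A_0^*|^2$, so $I-A_0A_0^*=I-A_0^*A_0$ and the defect operators commute with $A_0$. Writing $D=(I-A_0^*A_0)^{1/2}$, I would define
\[
\omega(w)=D^{-1}\bigl(g(w)-A_0\bigr)\bigl(I-A_0^*g(w)\bigr)^{-1}D.
\]
If $\|A_0\|=1$ the bound $\|A_n\|\leq 0$ fails unless handled separately, so I would first assume $\|A_0\|<1$ and pass to the limit. Under that assumption, $\omega$ is a contraction-valued holomorphic map on $\mathbb{D}$ with $\omega(\gamma)=0$. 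Inverting gives
\[
g=A_0+D\,\omega\,(I+A_0^*\omega)^{-1}D,
\]
so that
\[
g(w)-A_0=D\sum_{k\geq1}(-A_0^*)^{k-1}\omega(w)^k D.
\]
Then $\omega\circ\sigma^{-1}$, with $\sigma(w)=\frac{w-\gamma}{1-\gamma w}$, is an operator Schwarz function on $\mathbb{D}$. Hence $\omega(w)=\sigma(w)\,\Theta(w)$ with $\|\Theta\|\leq1$, by the operator Schwarz lemma, which is proved by applying the scalar Schwarz lemma to $\langle\omega x,y\rangle$ and then dividing by $\sigma$.

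The main obstacle is the coefficient extraction at $\gamma$. The difficulty is that crude triangle inequalities on the series in $\omega^k$ lose too much. Rather than expanding, I would follow Ruscheweyh's scalar argument directly. The first step is to note that $g(w)-A_0$ is, in the subordination sense, $D\,\sigma(w)\,\Psi(w)\,D$ with $\|\Psi\|\leq \|(I+A_0^*\omega)^{-1}\|\cdot\|\Theta\|$. The second step is to use that the $n$-th Taylor coefficient at $\gamma$ of $\sigma(w)h(w)$, for $\|h\|\leq 1$, is at most $\frac{(1+\gamma)^{n-1}}{(1-\gamma^2)^n}$ in norm. This is the extremal value attained by $\sigma$ itself, and it follows by pairing with unit vectors $x,y$ and applying the scalar Ruscheweyh/Rogosinski bound to $\langle h x,y\rangle$.

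The remaining worry is that the factor $(I+A_0^*\omega)^{-1}$ is not bounded by $1$. To get around this I would first try Fournier--Ruscheweyh's trick of averaging over $n$-th roots of unity after recentring at $\gamma$. This isolates the $n$-th coefficient so that only the linear term of the Möbius expansion survives, and then only the Schwarz-Pick-type constant $\|D\|^2=\|I-|A_0|^2\|$ remains, which yields the stated bound.
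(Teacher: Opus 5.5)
Your reduction is sound. The identity $A_n=(1-\gamma)^n g^{(n)}(\gamma)/n!$ with $g=f\circ\varphi$ holds, and your target estimate at $\gamma$ is exactly what the paper's Lemma~\ref{l2.2} (Anderson--Rovnyak) gives at $a=\gamma$. There normality is needed only to identify $\|I-A_0^*A_0\|^{1/2}\|I-A_0A_0^*\|^{1/2}$ with $\|I-|A_0|^2\|$. That is the intended one-line derivation of the cited lemma; you also rightly supply the hypothesis $\|f\|\le 1$, which the statement omits.

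\textbf{The gap.} It lies in your own proof of that estimate, and your last paragraph does not deliver it. Averaging over $n$-th roots of unity after recentring means averaging $h=g\circ\sigma^{-1}=\sum_{k\ge0}B_k\zeta^k$ (equivalently, averaging $g$ over the elliptic automorphisms fixing $\gamma$). This retains only $B_0,B_n,B_{2n},\dots$. The Schwarz lemma applied to the M\"obius transform of $H(\xi)=A_0+B_n\xi+\cdots$ then gives $\|B_n\|\le\|D\|^2$, and nothing more. But $g^{(n)}(\gamma)/n!$ is not a multiple of $B_n$: expanding $\sigma(w)^k$ about $w=\gamma$ gives
\[
\frac{g^{(n)}(\gamma)}{n!}=\frac{1}{(1-\gamma^2)^n}\sum_{k=1}^{n}\binom{n-1}{k-1}\gamma^{n-k}B_k .
\]
So the terms of the M\"obius expansion with up to $n$ factors of $\omega$ all contribute to the $n$-th coefficient at $\gamma$, not only the linear one. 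The averaging step therefore cannot produce the factor $(1+\gamma)^{n-1}$ in your target inequality. As written, the proposal does not prove the operator Ruscheweyh bound, and that bound is the whole content of the lemma.

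\textbf{The missing idea.} Run your averaging/Schwarz argument for every $k\ge1$, not just $k=n$. With $D_{A_0}=D_{A_0^*}=D$ (by normality), it gives the operator Wiener inequality $\|B_k\|\le\|D\|^2=\|I-|A_0|^2\|$ for all $k\ge1$. Then apply the triangle inequality to the displayed sum and use $\sum_{k=1}^n\binom{n-1}{k-1}\gamma^{n-k}=(1+\gamma)^{n-1}$; this yields exactly your target. So the ``crude'' triangle inequality you dismissed is lossless once it is applied to the Taylor expansion of $h$ rather than to the series in powers of $\omega$.

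There is also a shortcut. The map $\zeta\mapsto(1+\gamma)\zeta/(1+\gamma\zeta)$ sends $\mathbb{D}$ onto $\Omega_\gamma$ and fixes $0$. Hence $f=h_1\circ\psi$ with $\psi(z)=z/(1+\gamma-\gamma z)$, $\|h_1\|\le1$ and $h_1(0)=A_0$. All powers $\psi^k$ have nonnegative coefficients and $\sum_{k\ge1}\psi^k=z/((1+\gamma)(1-z))$, so $\|A_n\|\le\|I-|A_0|^2\|/(1+\gamma)$ follows at once.

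\textbf{Smaller slips.} First, $\omega(I+A_0^*\omega)^{-1}=\sum_{k\ge0}\omega(-A_0^*\omega)^k$, not $\sum_{k\ge1}(-A_0^*)^{k-1}\omega^k$, since $A_0^*$ need not commute with $\omega$. Second, for $\|A_0\|=1$ the bound is not $\|A_n\|\le0$ unless $A_0$ is unitary (take $A_0$ a nontrivial projection). Your limiting argument via $rf$, $r\to1^-$, handles that case correctly anyway.
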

\begin{lem}\label{l2.2}\cite{Anderson-Rovnyak-mathematika-2006}
Let $B(z)$ be an analytic function with values in $\mathcal{\mathcal{H}}$ and satisfying $\left||B(z)|\right|\leq1$ on $\mathbb{D}$. Then \beas (1-|a|)^{n-1}\left|\left|\frac{B^{(n)}(a)}{n!}\right|\right|\leq \frac{||I-B(a)^{*}B(a)||^{1/2}||I-B(a)B(a)^*||^{1/2}}{1-|a|^2}\eeas for each $a\in\mathbb{D}$ and $n\in\mathbb{N}$.
\end{lem}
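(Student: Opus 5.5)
We reduce to the point $a=0$ with a Möbius change of variable, where the estimate becomes a bound on Taylor coefficients. That bound comes from the structure of $2\times 2$ operator contractions. A majorant-series computation then returns to general $a$.

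\emph{Step 1 (the case $a=0$).} Write $h(w)=\sum_{k\ge0}C_kw^k$ with $\|h(w)\|\le1$ on $\mathbb{D}$. We want
\[
\|C_k\|\le K:=\|I-C_0^*C_0\|^{1/2}\|I-C_0C_0^*\|^{1/2},\qquad k\ge1 .
\]
Since $\sup_{\mathbb{D}}\|h\|\le1$, the multiplication operator $M_h$ on the vector-valued Hardy space $H^2(\mathcal{H})$ is a contraction. Compress $M_h$ to the subspace $\{x+w^ky:\ x,y\in\mathcal{H}\}$. In the decomposition $\mathcal{H}\oplus\mathcal{H}$ this compression is the lower triangular block matrix
\[
\begin{pmatrix} C_0 & 0\\ C_k & C_0\end{pmatrix},
\]
and it is again a contraction. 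We then invoke the standard description of contractive lower triangular block matrices, due to Parrott and to Davis--Kahan--Weinberger. A matrix $\begin{pmatrix} A&0\\ C&D\end{pmatrix}$ is a contraction if and only if
\[
C=(I-DD^*)^{1/2}\,\Gamma\,(I-A^*A)^{1/2}
\]
for some contraction $\Gamma$. Taking $A=D=C_0$ and using $\|T^{1/2}\|=\|T\|^{1/2}$ for positive $T$ gives $\|C_k\|\le K$.

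\emph{Step 2 (transport to $a$).} Let $\varphi_a(w)=(w+a)/(1+\bar a w)$ and put $h=B\circ\varphi_a$. Then $h$ is contractive and $C_0=h(0)=B(a)$, so Step 1 applies with $K$ equal to the numerator of the desired bound. Inverting the substitution gives
\[
B(z)=B(a)+\sum_{k\ge1}C_k\,\psi(z)^k,\qquad \psi(z)=\frac{z-a}{1-\bar a z}=\frac{z-a}{(1-|a|^2)-\bar a(z-a)} .
\]
Now expand in powers of $t=z-a$. The coefficients of $\psi$ in $t$ are dominated in modulus by those of
\[
\Psi(t)=\frac{t}{(1-|a|^2)-|a|t},
\]
which are nonnegative. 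Hence the coefficient of $t^n$ in $\sum_{k\ge1}C_k\psi^k$, namely $B^{(n)}(a)/n!$, has norm at most $K$ times the $t^n$ coefficient of $\sum_{k\ge1}\Psi^k=\Psi/(1-\Psi)$.

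\emph{Step 3 (computation).} Using $1-|a|^2+|a|=(1+|a|)\cdot 1$ after simplification, we get
\[
\frac{\Psi}{1-\Psi}=\frac{t}{(1-|a|^2)-(1+|a|)t}=\frac{t}{(1+|a|)(1-|a|-t)} .
\]
Its $t^n$ coefficient is $\dfrac{1}{(1+|a|)(1-|a|)^{n}}$. Therefore
\[
(1-|a|)^{n-1}\left\|\frac{B^{(n)}(a)}{n!}\right\|\le \frac{K}{(1+|a|)(1-|a|)}=\frac{K}{1-|a|^2}.
\]

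\emph{Main obstacle.} Step 1 is where the operator-theoretic content sits: we need the sharp product form $\|I-C_0^*C_0\|^{1/2}\|I-C_0C_0^*\|^{1/2}$, not merely $1$. This hinges on correctly identifying the compression of $M_h$ and on applying the factorization of off-diagonal entries of contractions. A subsidiary point is that the coefficientwise majorization in Step 2 is legitimate. The series converge absolutely for $|t|<1-|a|$, so formal coefficient comparison of majorant series is justified.
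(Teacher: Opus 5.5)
The paper gives no proof of this lemma. It is quoted from Anderson--Rovnyak and is never invoked afterwards; the proofs of the main theorems use only Lemma~\ref{lem-1}. So there is no internal argument to compare yours with. Judged on its own, your proof is correct and complete, reading $B$ as $\mathcal{B}(\mathcal{H})$-valued as the statement intends.

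Your argument is Ruscheweyh's scalar scheme transplanted to operators: a Wiener-type coefficient bound at the origin, then M\"obius transport with coefficient majorization. Steps~2 and~3 check out:
\begin{itemize}
\item the coefficients of $\psi$ in $t$ have moduli exactly equal to those of $\Psi$;
\item absolute convergence for $|t|<1-|a|$ justifies the rearrangement;
\item the $t^n$ coefficient of $\Psi/(1-\Psi)$ is as you state.
\end{itemize}

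In Step~1, the compression of $M_h$ to $\mathcal{H}\oplus w^k\mathcal{H}$ is indeed $\begin{pmatrix} C_0&0\\ C_k&C_0\end{pmatrix}$. It is worth recording why the \emph{joint} factorization is indispensable. Contractivity of the first column and of the second row separately only yields
\[
\|C_k\|\le\min\{\|I-C_0^*C_0\|^{1/2},\|I-C_0C_0^*\|^{1/2}\},
\]
which is weaker than the product. The joint form follows quickly:
\begin{itemize}
\item write $C=\Gamma_1(I-A^*A)^{1/2}$ from the first column;
\item note that contractivity of the full matrix makes the row $(\Gamma_1\ \ D)$ a contraction;
\item apply Douglas' lemma to $\Gamma_1\Gamma_1^*\le I-DD^*$.
\end{itemize}

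An equally short alternative is a unitary realization $h(w)=T_{22}+wT_{21}(I-wT_{11})^{-1}T_{12}$. It gives $C_k=T_{21}T_{11}^{k-1}T_{12}$ with $\|T_{12}\|=\|I-C_0^*C_0\|^{1/2}$ and $\|T_{21}\|=\|I-C_0C_0^*\|^{1/2}$. Your Toeplitz compression has the advantage of not needing the existence of such realizations. For normal $C_0$, your Step~1 is exactly the $\gamma=0$ case of Lemma~\ref{lem-1}.

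One cosmetic slip: the phrase ``$1-|a|^2+|a|=(1+|a|)\cdot 1$'' is false as written. What your computation actually uses is $|a|t+t=(1+|a|)t$ and $1-|a|^2=(1+|a|)(1-|a|)$, and the display itself is correct.
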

\section{Proof of the Theorems}
\begin{proof}[\bf{Proof of Theorem \ref{Ali-Haldar-thm-2.1}}]
In view of the assumption of the theorem and applying Lemma \ref{lem-1}, we easily obtain that $||A_n||\leq\frac{1-|a_0|^2}{1+\gamma}$ for all $n\in\mathbb{N}$. Using this, it follows that
\beas
M_{T^{f}_{\beta}}(r)&:=& \sum_{n=0}^{\infty}\left( \frac{1}{n+1}\sum_{k=0}^{n}\frac{\Gamma\left( n-k +\beta\right)}{\Gamma\left(n-k+1\right)\Gamma(\beta)} ||A_k||\right)r^n\nonumber\\&=&\sum_{n=0}^{\infty}\left( \frac{1}{n+1}\frac{\Gamma\left( n+\beta\right)}{\Gamma\left(n+1\right)\Gamma(\beta)}||A_0|| \right)r^n\nonumber\\&+&\sum_{n=0}^{\infty}\left( \frac{1}{n+1}\sum_{k=1}^{n}\frac{\Gamma\left( n-k +\beta\right)}{\Gamma\left(n-k+1\right)\Gamma(\beta)} ||A_k||\right)r^n\nonumber\\&\leq&|a_0|\sum_{n=0}^{\infty}\left( \frac{1}{n+1}\frac{\Gamma\left(n+\beta\right)}{\Gamma\left(n+1\right)\Gamma(\beta)} \right)r^n\nonumber\\&+& \frac{1-|a_0|^2}{1+\gamma}\;\sum_{n=0}^{\infty}\left( \frac{1}{n+1}\sum_{k=1}^{n}\frac{\Gamma\left( n-k +\beta\right)}{\Gamma\left(n-k+1\right)\Gamma(\beta)}
\right)r^n.\eeas
From the above inequality, it is easily seen that \bea\label{e4.1a}    M_{T^{f}_{\beta}}(r)&\leq&\frac{|a_0|}{r}\int_{0}^{r}\frac{1}{(1-t)^\beta}dt\;+\frac{(1-|a_0|^2)}{(1+\gamma)}\frac{1}{r}\int_0^{r}\frac{t}{(1-t)^{\beta+1}}\nonumber\\&\leq&\frac{|a_0|}{r}\int_{0}^{r}\frac{1}{(1-t)^\beta}dt\;+\frac{(1-|a_0|^2)}{(1+\gamma)}\frac{1}{r}\left(\int_0^{r}\frac{1}{(1-t)^{\beta+1}}-\int_0^{r}\frac{1}{(1-t)^{\beta}}\right)\nonumber\\&=&\left(\frac{|a_0|}{r}-\frac{1-|a_0|^2}{(1+\gamma)r}\right)\int_0^r\frac{1}{(1-t)^\beta}dt+ \frac{(1-|a_0|^2)}{(1+\gamma)}\frac{1}{r} \int_0^r\frac{1}{(1-t)^{\beta+1}}dt\nonumber\\&=&\left(\frac{|a_0|}{r}-\frac{1-|a_0|^2}{(1+\gamma)r}    \right)\frac{1-(1-r)^{1-\beta}}{1-\beta}+ \frac{(1-|a_0|^2)}{(1+\gamma)}\frac{1}{r} \left(\frac{1-(1-r^{-\beta})}{-\beta}\right)\nonumber\\&=&\frac{1}{r(1+\gamma)}\left(|a_0|(1+\gamma)-(1-|a_0|^2)\right)\frac{1-(1-r)^{1-\beta}}{1-\beta}\nonumber\\&+&(1-|a_0|^2)(\frac{(1-r)^{-\beta}-1}{\beta}):=\Phi(|a_0|,r).\eea
Now differentiating $\Phi(|a_0|,r)$ twice with respect to $|a_0|$, we get
\beas\Phi^{\prime}(|a_0|,r)=\frac{1}{r(1+\gamma)}\left((1+\gamma+2|a_0|)\frac{1-(1-r)^{1-\beta}}{1-\beta}  -2|a_0|\frac{(1-r)^{-\beta}-1}{\beta}\right)\eeas and 
\beas \Phi^{\prime\prime}(|a_0|,r)= \frac{2}{r(1+\gamma)}\left(\frac{1-(1-r)^{1-\beta}}{1-\beta}-\frac{(1-r)^{-\beta}-1}{\beta}\right).\eeas
\par  Let us set \beas F(x):=\frac{1-x^{1-\beta}}{1-\beta}-\frac{x^{-\beta}-1}{\beta},\;\;\text{where}\;\; x=1-r.\eeas Then one can easily see that $F(x)$ is monotonic increasing function of $x\in[0,1)$. This implies that $F(x)\leq F(1)=0$, and thus $\Phi^{\prime\prime}(|a_0|,r)\leq0$, which implies that \bea\label{e4.1}\Phi'(|a_0|,r)\geq  \Phi'(1,r)=\frac{1}{r(1+\gamma)}\left((3+\gamma)\frac{1-(1-r)^{1-\beta}}{1-\beta} -2\frac{(1-r)^{-\beta}-1}{\beta}\right).\eea
\par Consider the function 
\beas G(x):= (3+\gamma)\frac{1-x^{1-\beta}}{1-\beta} -2\frac{x^{-\beta}-1}{\beta},\;\;\text{where}\;\;x=1-r.\eeas
A simple calculation shows that $G^{\prime}(x)\geq0$ for $r\leq(1+\gamma)/(3+\gamma)$, i.e., $G(x)$ is monotonic increasing for $r\leq(1+\gamma)/(3+\gamma)$. Hence $	\Phi'(1,r)$ is monotonic increasing for $r\leq(1+\gamma)/(3+\gamma)$.
\par Again we observe that $\Phi'(1,r)|_{r=0}=0$ and $\lim_{r \to1^-}\Phi'(1,r)\rightarrow-\infty$. Therefore, $\Phi'(1,r)\geq0$ for $r\leq R_{\beta}$, where $R_{\beta}$ is the smallest positive root of the equation \beas (3+\gamma)\frac{1-(1-x)^{1-\beta}}{1-\beta}-2\frac{(1-x)^{-\beta}-1}{\beta}=0.\eeas
\par Therefore, in view of \eqref{e4.1}, it follows that $\Phi'(|a_0|,r)\geq0$ for $0\leq r\leq R_\beta$. This implies that \beas\Phi(|a_0|,r) \leq \Phi(1)= \frac{1}{r}\left(\frac{1-(1-r)^{1-\beta}}{1-\beta}\right).\eeas
Hence in view of \eqref{e4.1a} and the above inequality, we obtain \beas M_{T^{f}_{\beta}}(r) \leq\frac{1}{r}\left(\frac{1-(1-r)^{1-\beta}}{1-\beta}\right)\eeas for $r\leq R_{\beta}$, where $R_{\beta}$ is the smallest positive root of the equation \beas (3+\gamma)\frac{1-(1-x)^{1-\beta}}{1-\beta}-2\frac{(1-x)^{-\beta}-1}{\beta}=0.\eeas
\par In order to show that the radius is sharp, we consider the function
\beas \chi_\gamma(z)=\left(\frac{a-\gamma-(1-\gamma)z}{1-a\gamma-a(1-\gamma)z}\right)I,\eeas where  $z\in\Omega_\gamma$, $a\in(0,1)$, $0\leq\gamma<\frac{1}{2-a}$ and $I$ is the identity operator on $\mathcal{B}(\mathcal{H})$.
\par Let us define the function $L:\Omega_\gamma\to\mathbb{D}$ by $L(z):=(1-\gamma)z+\gamma$ and $\phi:\mathbb{D}\to\mathbb{D}$ by \beas \phi(z):=\frac{a-z}{1-az}\qquad\text{for }a\in(0,1).\eeas
\par It can be easily seen that $\chi_\gamma=\phi\circ L$ maps $\Omega_\gamma$ univalently onto
$\mathbb{D}$. After simple computations, we see that \bea\label{e4.2}
\chi_\gamma(z)=\chi_a(z)I=A_0-\sum_{n=1}^{\infty}A_nz^n,\qquad z\in\mathbb{D},\eea where  $A_0=\left(\frac{a-\gamma}{1-a\gamma}\right)I$
and $A_n=\left(\frac{1-a^2}{a(1-a\gamma)}\left(\frac{a(1-\gamma)}{1-a\gamma}\right)^n\right)I\;\;\text{for }n\in\mathbb{N}.$
\par Now \bea\label{e4.4} M_{T^{f}_{\beta}}(r)&=&\sum_{n=0}^{\infty}\left(\frac{1}{n+1}\sum_{k=0}^{n}	\frac{\Gamma(n-k+\beta)}{\Gamma(n-k+1)\Gamma(\beta)} ||A_k||\right)r^n\nonumber\\&=&\sum_{n=0}^{\infty}\frac{|| A_0||}{n+1}
\frac{\Gamma(n+\beta)}{\Gamma(n+1)\Gamma(\beta)}r^{n}+\sum_{n=0}^{\infty}\left(\frac{1}{n+1}\sum_{k=1}^{n}\frac{\Gamma(n-k+\beta)}{\Gamma(n-k+1)\Gamma(\beta)}||A_k||\right)r^n\nonumber\\&=&\frac{(1-a^2)(1-\gamma)}{(1-a\gamma)^2}\sum_{n=0}^\infty \left(\frac{1}{n+1} \sum_{k=1}^n \frac{\Gamma(n-k+\beta)}{\Gamma(n-k+1) \Gamma(\beta)} \left(\frac{a(1-\gamma)}{1-a\gamma}\right)^n\right)r^n\nonumber\\&+&\frac{a-\gamma}{(1-a\gamma)r}\left(\frac{1-(1-r)^{1-\beta}}{1-\beta}\right).\eea
\par A simple computation shows that \bea\label{e4.5} && \frac{a-\gamma}{(1-a\gamma)r}\left(\frac{1-(1-r)^{1-\beta}}{1-\beta}\right)\nonumber\\&&=\frac{1}{r}\;\frac{1-(1-r)^{1-\beta}}{1-\beta}-\frac{1-a}{r(1-a\gamma)}\left[ (3+\gamma)\frac{1-(1-r)^{1-\beta}}{1-\beta}-2\frac{(1-r)^{-\beta}-1}{\beta}\right]\nonumber\\&&+\frac{2(1-a)}{r(1-a\gamma)}\left[ \frac{1-(1-r)^{1-\beta}}{1-\beta}-\frac{(1-r)^{-\beta}-1}{\beta}\right]\nonumber\\&&=\frac{1}{r}\;\frac{1-(1-r)^{1-\beta}}{1-\beta}-\frac{1-a}{r(1-a\gamma)}\left[ (3+\gamma)\frac{1-(1-r)^{1-\beta}}{1-\beta}-2\frac{(1-r)^{-\beta}-1}{\beta}\right]\nonumber\\&&+\frac{2(1-a)}{r(1-\gamma)}\left[ \frac{1-(1-r)^{1-\beta}}{1-\beta}-\frac{(1-r)^{-\beta}-1}{\beta}\right]+O(1-a)^2.\eea
\par In view of \eqref{e4.4} and \eqref{e4.5}, one can easily observe that $M_{T^{f}_{\beta}}(r)>\frac{1}{r}\;\frac{1-(1-r)^{1-\beta}}{1-\beta}$ whenever  $r>R_\beta$ and $a\rightarrow1^-$. This proves the sharpness.
\end{proof}
\begin{proof}[\bf{Proof of Theorem \ref{t4.4}}]
In view of Lemma \ref{lem-1}, we easily obtain that $||A_n||\leq\frac{1-|a_0|^2}{1+\gamma}$ for all $n\in\mathbb{N}$. Using this, we obtain 
\bea\label{e5.10} M_V^f(r)&=&\sum_{n=0}^{\infty}\frac{||A_n||}{n+1}r^{n+1}=||A_0||r+\sum_{n=1}^{\infty}\frac{||A_n||}{n+1}r^{n+1}\nonumber\\&=& |a_0|r+\frac{1-|a_0|^2}{1+\gamma}\sum_{n=1}^{\infty}\frac{r^{n+1}}{n+1}=|a_0|r-\frac{1-|a_0|^2}{1+\gamma}\;[\ln(1-r)+r]\nonumber\\&:=&\Phi(a_0,r).\eea
\par Differentiating $\Phi(a_0,r)$ twice with respect to $a_0$, we have  
\beas \Phi^{\prime}(a_0,r)&=&r+\frac{2a_0}{1+\gamma}[\ln (1-r)+r],\\\Phi^{\prime\prime}(a_0,r)&=&\frac{2}{1+\gamma}[\ln (1-r)+r]\leq 0\;\;\text{for}\;\;0<r\leq 1.\eeas
\par This implies that $\Phi^{\prime}(a_0,r)\geq\Phi^{\prime}(1,r)=r+\frac{2}{1+\gamma}\;[\ln (1-r)+r]$, which upon simplification gives \beas \Phi^{\prime}(a_0,r)\geq\Phi^{\prime}(1,r) &=&\frac{1}{1+\gamma}\left[(3+\gamma)r+2\ln (1-r)\right]\\&:=&\frac{F(r)}{1+\gamma}.\eeas
\par We observe that $F(0)=0$, $F^{\prime}(r)\geq0$ for $r\leq(1+\gamma)/(3+\gamma)$, $\lim_{r\to 1^-}F(r)=-\infty$, and hence $F(x)=0$ must have a positive root, say $x=R_\gamma$ in $(0,1)$, which depens on $\gamma$. This implies that $\Phi^{\prime}(a_0,r)\geq0$ for $r\leq R_\gamma$. Therefore, in view of \eqref{e5.10}, we obtain \beas M_V^f(r)=\Phi(a_0,r)\leq \Phi(1,r)=r.\eeas
\par It remain to prove the sharpness of the result. For this we consider the same function as defined in \eqref{e4.2}, where $a\in(0,R_\gamma)$.
\par Now \bea\label{e5.11} M_V^{\chi}(r)&=&\sum_{n=0}^{\infty}\frac{||A_n||}{n+1}r^{n+1}=||A_0||r+\sum_{n=1}^{\infty}\frac{||A_n||}{n+1}r^{n+1}\nonumber\\&=& ||A_0||r+\frac{1-||A_0||^2}{1+\gamma}\sum_{n=1}^{\infty}\frac{r^{n+1}}{n+1}\nonumber\\&=&\frac{r(a-r)}{1-ar}+\frac{1-a^2}{a(1-ar)}\sum_{n=1}^{\infty}\left(\frac{a(1-r)}{1-ar}\right)^n\frac{r^{n+1}}{n+1}\nonumber\\&=& r+\frac{r(a-1)(1+\gamma)}{1-a\gamma}+\frac{1-a^2}{a(1-ar)}\sum_{n=1}^{\infty}\left(\frac{a(1-r)}{1-ar}\right)^n\frac{r^{n+1}}{n+1}\nonumber\\&=& r+\frac{(a-1)(1+\gamma)^2}{(1-a\gamma)(3+\gamma)}\left(\frac{r(3+\gamma)}{1+\gamma}+\frac{2\ln (1-r)}{1+\gamma}\right)\nonumber\\&+&\frac{1-a^2}{a(1-ar)}\sum_{n=1}^{\infty}\left(\frac{a(1-r)}{1-ar}\right)^n\frac{r^{n+1}}{n+1}-\frac{2(a-1)(1+\gamma)}{(1-a\gamma)(3+\gamma)}\ln (1-r)\nonumber\\&=& r-\frac{(1-a)(1+\gamma)}{(1-a\gamma)(3+\gamma)}[(3+\gamma)r+2\ln (1-r)]+E_{a,\gamma}(r),\eea where 
\beas && E_{a,\gamma}(r)=\frac{1-a}{1-a\gamma}\left[\frac{1+a}{a}\sum_{n=1}^{\infty}\left(\frac{a(1-\gamma)}{1-a\gamma}\right)^n\frac{r^{n+1}}{n+1}+\frac{2(1+\gamma)}{3+\gamma}\ln (1-r)\right]\\&&= \frac{1-a}{1-a\gamma}\left[\frac{(1+a)(1-a\gamma)}{a^2(1-\gamma)}\left(-\ln(1-\frac{r(1-a\gamma)}{a(1-\gamma)})-\frac{r(1-a\gamma)}{a(1-\gamma)}\right)+\frac{2(1+\gamma)\ln (1-r)}{3+\gamma}\right].\eeas
\par Now after simple computations it is easy to observe that $E_{a,\gamma}(0)=0$ and $E_{a,\gamma}^{\prime}(r)> 0$ for $r>R_\gamma$. Therefore, $E_{a,\gamma}>0$ for $r>R_\gamma$. 
\par Hence in view of \eqref{e5.11} and the fact that $(3+\gamma)r+2\ln (1-r)<0$ for $r>R_\gamma$, it is easily seen that $M_V^\chi(r)>r$. This shows the sharpness. 
\end{proof}
\noindent\textbf{Conflict of interest:} The authors declare that there is no conflict  of interest regarding the publication of this paper.\vspace{1.2mm}

\noindent {\bf Funding:} Not Applicable.\vspace{1.2mm}

\noindent\textbf{Data availability statement:}  Data sharing not applicable to this article as no datasets were generated or analysed during the current study.\vspace{1.2mm}

\noindent {\bf Authors' contributions:} All the authors have equal contributions in preparation of the manuscript.

\end{document}